\theoremstyle{plain}
\newtheorem{theorem}{Theorem}
\newtheorem{lemma}[theorem]{Lemma}
\theoremstyle{definition}
\theoremstyle{remark}
\newcommand{\seqnum}[1]{\underline{#1}}
\begin{document}

\title{\Large\bf On sums of the small divisors of a natural number}

\author{\it Douglas E. Iannucci\\ \it  University of the Virgin Islands\\ \it 2 John Brewers Bay \\ \it St. Thomas VI 00802\\ \tt diannuc@uvi.edu}\date{}

\maketitle

\begin{abstract}We consider the positive divisors of a natural number that do not exceed its square root, to which we refer as the {\it small divisors\/} of the natural number. We determine the asymptotic behavior of the arithmetic function that adds the small divisors of a natural number, and we consider its Dirichlet generating series. 
\end{abstract}

\section{Introduction}\label{intro} By the {\it small divisors\/} of a natural number~$n$, we mean the set of integers
$$\{d: d\mid n, 1\le d\le\sqrt{n}\}.$$ 
The phrase ``small divisors,'' as defined here, is not to be confused with classical small divisors problems of mathematical physics (see, e.g., Yoccoz~\cite{yaccoz}). Aside from an earlier paper by the author~\cite{ian}, our definition of this phrase seems absent from the literature.
Define the arithmetic function~$a$ by
$$a(n)=\sum_{d\mid n \atop d\le\sqrt{n}}d,$$
the sum taken over natural numbers. Thus $a(n)$ adds the small divisors of~$n$. The sequence $a(n)$ appears as sequence \seqnum{A066839} in the {\it OEIS}~\cite{oeis}. We have the trivial bound,
$$a(n)\le\sum_{k=1}^{[\sqrt{n}]}k=
\frac12\left[\sqrt{n}\right]\left(\left[\sqrt{n}\right]+1\right)\le
\frac12\left(n+\sqrt{n}\right)\le n.$$
A.~W.~Walker has pointed out that
$$a(n)=\sum_{{d\mid n}\atop{d\le\sqrt{n}}}d\le\sqrt{n}\sum_{d\mid n}1
=\sqrt{n}\,\tau(n),$$
where $\tau(n)$ denotes the sum of {\it all\/} the positive divisors of~$n$. As
$$\lim_{n\to\infty}\frac{\tau(n)}{n^{\delta}}=0$$
for all $\delta>0$ (see Apostol~\cite[Theorem~13.12]{apostol}), it follows that
$$\lim_{n\to\infty}\frac{a(n)}{n^{\frac12+\delta}}=0$$
for all $\delta>0$. Thus, it seems that~$a(n)$ compares with~$\sqrt{n}$. In \S~\ref{asymp}, we in fact prove that $a(n)$ has average order~$\sqrt{n}$.  In \S~\ref{dirichlet} we obtain some properties of the Dirichlet generating series for~$a(n)$. 

We observe here that the function $a(n)$ is not multiplicative. It is, however, supermultiplicative:
\begin{lemma}\label{supermult}
If~$m$ and~$n$ are relatively prime natural numbers, then $a(mn)\ge a(m)a(n)$.
\end{lemma}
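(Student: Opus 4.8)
The plan is to exhibit an injection, compatible with the relevant sums, from the set of pairs consisting of a small divisor of~$m$ together with a small divisor of~$n$ into the set of small divisors of~$mn$; the obvious candidate is the multiplication map $(d_1,d_2)\mapsto d_1d_2$.

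First I would check that this map does land where we want it to: if $d_1\mid m$ and $d_2\mid n$, then $d_1d_2\mid mn$, and if in addition $d_1\le\sqrt m$ and $d_2\le\sqrt n$, then $d_1d_2\le\sqrt m\,\sqrt n=\sqrt{mn}$, so $d_1d_2$ is indeed a small divisor of~$mn$. Next I would invoke the hypothesis $\gcd(m,n)=1$ to see that the map is injective: every divisor~$e$ of~$mn$ factors uniquely as $e=e_1e_2$ with $e_1\mid m$ and $e_2\mid n$ (take $e_1=\gcd(e,m)$ and $e_2=\gcd(e,n)$), so distinct pairs $(d_1,d_2)$ yield distinct products~$d_1d_2$.

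With these two facts in hand, expanding the product gives
$$a(m)\,a(n)=\biggl(\sum_{\substack{d_1\mid m\\ d_1\le\sqrt m}}d_1\biggr)\biggl(\sum_{\substack{d_2\mid n\\ d_2\le\sqrt n}}d_2\biggr)=\sum_{\substack{d_1\mid m,\ d_1\le\sqrt m\\ d_2\mid n,\ d_2\le\sqrt n}}d_1d_2,$$
and by the observations above the summands on the right are distinct small divisors of~$mn$. Since all small divisors of~$mn$ are positive, this last sum is at most $\sum_{e\mid mn,\ e\le\sqrt{mn}}e=a(mn)$, which is exactly the asserted inequality $a(mn)\ge a(m)a(n)$.

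The only step that genuinely requires attention is the injectivity of $(d_1,d_2)\mapsto d_1d_2$, and this is precisely the point at which coprimality of~$m$ and~$n$ enters; it is a routine consequence of unique factorization. Everything else is a straightforward comparison of sums of positive terms, so I do not expect any real obstacle beyond recording these observations carefully.
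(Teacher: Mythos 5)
Your proposal is correct and follows essentially the same argument as the paper: expand the product $a(m)a(n)$, observe that each product $d_1d_2$ is a small divisor of $mn$ since $d_1d_2\le\sqrt{m}\sqrt{n}=\sqrt{mn}$, and use coprimality to see these products are distinct, so the expanded sum is bounded by $a(mn)$. Your added detail on injectivity via $e_1=\gcd(e,m)$, $e_2=\gcd(e,n)$ is a fine elaboration of the same point.
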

\begin{proof}
Suppose~$d_1$ and~$d_2$ are small divisors of~$m$, and~$d_1'$ and~$d_2'$ are small divisors of~$n$. Since $\gcd(m,n)=1$, we have $d_1d_1'=d_2d_2'$ if and only if $d_1=d_2$ and $d_1'=d_2'$. Therefore the product
\begin{equation}\label{smsum}
a(m)a(n)=\Big(\sum_{d\mid m \atop d\le\sqrt{m}}d\Big)\Big(\sum_{d'\mid n \atop d'\le\sqrt{n}}d'\Big)
\end{equation}
gives a sum, all of whose addends are distinct small divisors of~$mn$. Therefore $a(m)a(n)\le a(mn)$.
\end{proof}
Note that $a(24)=10$ and $a(36)=16$. Yet, $26\cdot36=864$ and $a(864)=130<160$. Hence $a(n)$ is not completely supermultiplicative.

\section{Asymptotic behavior of $\boldsymbol{a(n)}$}\label{asymp}
Two functions $f(x)$ and $g(x)$ are said to be {\it asymptotic\/} when
$$\lim_{x\to\infty}\frac{f(x)}{g(x)}=1,$$
and we denote this by $f(x)\sim g(x)$. We shall use the notation of Bachmann and Landau, viz., 
$$f(x)=O\left(g(x)\right),$$
whenever $|f(x)|\le C|g(x)|$ as $x\to\infty$ for some positive constant~$C$ independent of~$x$. 

If $f(n)$ and $g(n)$ are arithmetic functions, we say $f(n)$ is of {\it average order\/} $g(n)$ whenever
$$\sum_{k=1}^n f(k)\sim \sum_{k=1}^n g(k)$$
(e.g., see Hardy and Wright~\cite[\S\ 18.2]{HW}). 
\begin{theorem}\label{avorder}
The function $a(n)$ is of average order $\sqrt{n}$. More precisely,
\begin{equation}\label{avordereq}
\sum_{k=1}^n a(k)=\frac23 n\sqrt{n} + O(n\ln n).
\end{equation}
\end{theorem}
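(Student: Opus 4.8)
The plan is to reduce the sum to an elementary lattice-point count by interchanging the order of summation. Starting from the definition,
\[
\sum_{k=1}^{n} a(k)=\sum_{k=1}^{n}\ \sum_{\substack{d\mid k\\ d^{2}\le k}} d,
\]
I would group the terms according to the divisor $d$. Writing $k=dm$, the inner conditions $d\mid k$ and $d^{2}\le k\le n$ become $d\le m$ and $m\le n/d$, which are compatible precisely when $d\le\sqrt n$, and for such $d$ there are $\lfloor n/d\rfloor-d+1$ admissible values of $m$. This yields
\[
\sum_{k=1}^{n} a(k)=\sum_{d\le\sqrt n} d\Bigl(\Bigl\lfloor\tfrac nd\Bigr\rfloor-d+1\Bigr)
=\sum_{d\le\sqrt n} d\Bigl\lfloor\tfrac nd\Bigr\rfloor-\sum_{d\le\sqrt n}d^{2}+\sum_{d\le\sqrt n}d.
\]

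Each of the three sums is then estimated by hand. Using $d\lfloor n/d\rfloor=n+O(d)$, the first equals $n\lfloor\sqrt n\rfloor+O\bigl(\sum_{d\le\sqrt n}d\bigr)=n^{3/2}+O(n)$, since $\lfloor\sqrt n\rfloor=\sqrt n+O(1)$ and hence $n\lfloor\sqrt n\rfloor=n^{3/2}+O(n)$. The closed form for $\sum_{d\le x}d^{2}$ gives the second sum as $\tfrac13\lfloor\sqrt n\rfloor^{3}+O(n)=\tfrac13 n^{3/2}+O(n)$, and the third is visibly $O(n)$. Subtracting, $\sum_{k\le n}a(k)=n^{3/2}-\tfrac13 n^{3/2}+O(n)=\tfrac23 n\sqrt n+O(n)$, which is slightly stronger than the stated estimate; the claimed $O(n\ln n)$ follows a fortiori, and in fact it also arises directly if one instead organizes the sum by the cofactor $e=k/d$, which reduces it to $\sum_{e\le\sqrt n}\tfrac12 e(e+1)$ together with $\tfrac12\sum_{\sqrt n<e\le n}\lfloor n/e\rfloor(\lfloor n/e\rfloor+1)$, the latter handled via $\lfloor n/e\rfloor^{2}=n^{2}/e^{2}+O(n/e)$ and $\sum_{e\le n}1/e=\ln n+O(1)$.

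I do not expect a genuine obstacle: the only step needing care is the interchange itself, where one must observe that after grouping by $d$ the single constraint $d\le\sqrt k$ splits into $m\ge d$ and $d\le\sqrt n$, the former producing the term $-\sum_{d\le\sqrt n}d^{2}$ responsible for the coefficient $\tfrac23$ rather than $1$; everything afterwards is routine bookkeeping with floor functions and with the elementary sums $\sum d$ and $\sum d^{2}$. Finally, since $\sum_{k\le n}\sqrt k=\tfrac23 n\sqrt n+O(\sqrt n)$ by comparison with $\int_{0}^{n}\sqrt t\,dt$, the displayed asymptotic gives $\sum_{k\le n}a(k)\sim\sum_{k\le n}\sqrt k$, so $a(n)$ indeed has average order $\sqrt n$, completing the theorem.
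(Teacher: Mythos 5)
Your proof is correct, and it takes a genuinely different (and in fact sharper) route than the paper's. The paper writes $\sum_{k\le n}a(k)$ as a sum of $y=d$ over the lattice points under the hyperbola $xy=n$ with $y\le x$ (where $x=k/d$), splits that region into $A=\{0<y\le x\le\sqrt n\}$ and $B=\{x>\sqrt n,\;0<y\le n/x\}$, and sums column by column in $x$; the region-$B$ columns contribute $\tfrac12\lfloor n/x\rfloor(\lfloor n/x\rfloor+1)$, whose cross terms of size $O(n/x)$ accumulate to $O(n\ln n)$ --- this is precisely the alternative you sketch parenthetically via the cofactor $e=k/d$. You instead slice the same count by the divisor $d$ (row by row), collapsing everything into the single sum $\sum_{d\le\sqrt n}d\bigl(\lfloor n/d\rfloor-d+1\bigr)$; your justification of the interchange is right ($d\mid k$, $d^2\le k\le n$ with $k=dm$ is exactly $d\le m\le\lfloor n/d\rfloor$, nonempty precisely when $d\le\sqrt n$), and your three elementary estimates are correct. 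Because the floor error in $d\lfloor n/d\rfloor=n+O(d)$ is only $O(d)$, your total error is $O(n)$, so you get $\sum_{k\le n}a(k)=\tfrac23 n\sqrt n+O(n)$, strictly stronger than the stated $O(n\ln n)$, which then follows a fortiori; the closing comparison with $\sum_{k\le n}\sqrt k=\tfrac23 n\sqrt n+O(\sqrt n)$ correctly yields the average-order claim. What the paper's two-region decomposition buys is reuse: the same regions $A$ and $B$ reappear verbatim in its proof of the abscissa-of-convergence theorem, whereas your single-sum argument is self-contained and quantitatively better for this theorem.
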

\begin{proof}
Equation~\eqref{avordereq} proves the theorem, for, by elementary calculus,
$$\sum_{k=1}^n\sqrt{k}=\frac23 n\sqrt{n} + O(\sqrt{n}).$$
Note that
\begin{equation}\label{splitab}
\sum_{k=1}^n a(k)=\sum_{k=1}^n\sum_{d\mid k \atop d\le\sqrt{k}}d=\sum_{(x,y)\in A}y+\sum_{(x,y)\in B}y,
\end{equation}
where the ordered pairs $(x,y)$ range over all lattice points (that is, where $x$, $y\in\mathbb{Z}$) of two regions, $A\subset\mathbb{R}^2$ and~$B\subset\mathbb{R}^2$,  which are defined as follows,
\begin{align*}
A&=\{(x,y) : 0< y\le x\le\sqrt{n}\},\\
B&=\{(x,y) : \sqrt{n}<x,\;0< y\le n/x\,\},\end{align*}
and which are depicted in Figure~\ref{hwpf}. 

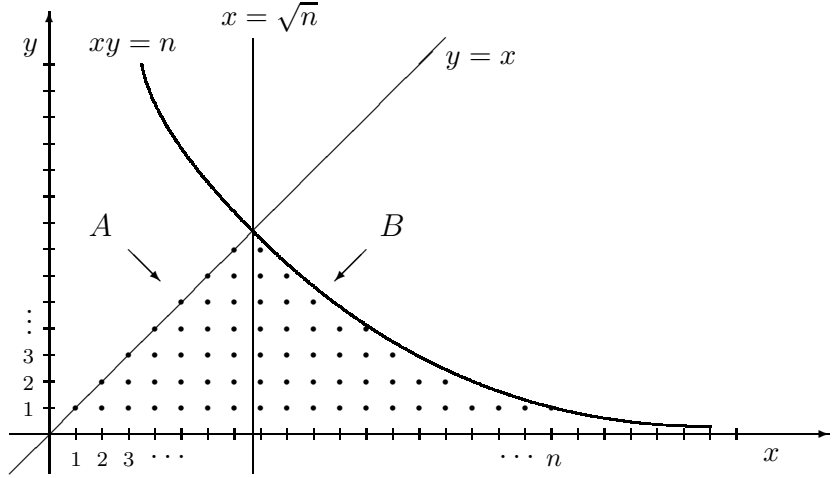
\begin{figure}
\caption{Lattice points in regions $A$ and $B$}
\label{hwpf}
\begin{center}
\begin{picture}(325,190)(-25,-25)
\put(-15,0){\vector(1,0){310}}\put(270,-10){\small$x$}
\multiput(10,-2)(10,0){26}{\line(0,1){4}}
\put(8,-12){\scriptsize{1}}\put(18,-12){\scriptsize{2}} 
\put(28,-12){\scriptsize{3}}\put(38,-9){\dots} 
\put(170,-9){\dots}\put(188,-12){\footnotesize{$n$}} 
\put(0,-15){\vector(0,1){175}}\put(-10,145){\small$y$}
\multiput(-2,10)(0,10){14}{\line(1,0){4}}
\put(-10,7){\scriptsize{1}}\put(-10,17){\scriptsize{2}} 
\put(-10,27){\scriptsize{3}}\put(-9,38){$\vdots$} 
\put(77,-15){\line(0,1){165}}\put(65,155){\small$x=\sqrt{n}$}
\put(-15,-15){\line(1,1){165}}\put(150,140){\small$y=x$}
\linethickness{0.7pt}\qbezier(77,77)(150,3)(250,3)
\linethickness{0.7pt}\qbezier(35,140)(40,114)(77,77)
\put(15,145){\small$xy=n$}
\multiput(10,10)(10,0){19}{\circle*{2}}
\multiput(20,20)(10,0){14}{\circle*{2}}
\multiput(30,30)(10,0){11}{\circle*{2}}
\multiput(40,40)(10,0){9}{\circle*{2}}
\multiput(50,50)(10,0){6}{\circle*{2}}
\multiput(60,60)(10,0){4}{\circle*{2}}
\multiput(70,70)(10,0){2}{\circle*{2}}
\put(15,75){$A$}\put(30,70){\vector(1,-1){12}}
\put(125,75){$B$}\put(120,70){\vector(-1,-1){12}}
\end{picture}
\end{center}
\end{figure}

Then,
$$\sum_{(x,y)\in A}y=\sum_{x=1}^{[\sqrt{n}]}\sum_{y=1}^{x}y
=\frac12\sum_{x=1}^{[\sqrt{n}]}x(x+1)
=\frac12\sum_{x=1}^{[\sqrt{n}]}x^2+\frac12\sum_{x=1}^{[\sqrt{n}]}x.$$
The first of these two sums yields
$$\frac12\sum_{x=1}^{[\sqrt{n}]}x^2
=\frac{[\sqrt{n}]([\sqrt{n}]+1)(2[\sqrt{n}]+1)}{12}
=\frac{(\sqrt{n}+O(1))^2(2\sqrt{n}+O(1))}{12},$$
while the second sum yields
$$\frac12\sum_{x=1}^{[\sqrt{n}]}x=\frac{[\sqrt{n}]([\sqrt{n}]+1)}4=\frac{(\sqrt{n}+O(1))^2}4,$$
where we have applied $[x]=x+O(1)$ for all real~$x$. Therefore
\begin{align*}
\sum_{(x,y)\in A}y&=\frac{(\sqrt{n}+O(1))^2(2\sqrt{n}+O(1))}{12}+\frac{(\sqrt{n}+O(1))^2}4\\
&=\frac1{12}(\sqrt{n}+O(1))^2(2\sqrt{n}+O(1)),\end{align*}
which yields
\begin{equation}\label{regiona}
\sum_{(x,y)\in A}y=\frac{n\sqrt{n}}6+O(n).
\end{equation}
Next, we have
\begin{align*}
\sum_{(x,y)\in B}y&=\sum_{x=[\sqrt{n}]+1}^{n}\sum_{y=1}^{[n/x]}y\\
&=\frac12\sum_{x=[\sqrt{n}]+1}^{n}\left[\frac{n}{x}\right]\left(\left[\frac{n}{x}\right]+1\right)\\
&=\frac12\sum_{x=[\sqrt{n}]+1}^{n}\left(\frac{n}{x}+O(1)\right)^2,
\end{align*}
yielding
$$\sum_{(x,y)\in B}y=\frac{n^2}2\sum_{x=[\sqrt{n}]+1}^{n}\frac1{x^2}+O(n\ln n),$$
which follows because $\sum_{x=a}^{b}\frac1{x}=O(\ln b)$ for all $a$, $b\in\mathbb{N}$, $a<b$. By elementary calculus,
$$\sum_{x=[\sqrt{n}]+1}^{n}\frac1{x^2}=\frac1{\sqrt{n}}+O\left(\frac1{n}\right),$$
hence
\begin{equation}\label{regionb}
\sum_{(x,y)\in B}y=\frac{n\sqrt{n}}2+O(n\ln n).\end{equation}
As~\eqref{avordereq} follows immediately from~\eqref{splitab}, \eqref{regiona}, and~\eqref{regionb}, the proof is complete.
\end{proof}
\vskip 12pt

It is thus natural to consider the behavior of the sequence $a(n)/\sqrt{n}$. Perhaps unsurprisingly, this behavior is irregular. For instance, it is clear that 
$$\lim\inf \frac{a(n)}{\sqrt{n}}=0,$$
as $a(p)=1$ for all primes~$p$. On the other hand, it is easy to see that
$$\lim\sup \frac{a(n)}{\sqrt{n}}=\infty.$$
For, we need only consider the sequence $s_n=p_1^2p_2^2\cdots p_n^2$, where the primes are enumerated as $p_1=2$, $p_2=3$, and so on. Every number of the form $p_1^{\epsilon_1}p_2^{\epsilon_2}\cdots p_n^{\epsilon_n}$, where $\epsilon_k=0$ or~1 for $1\le k\le n$, is a small divisor of~$s_n$. Therefore
$$a(s_n)\ge\sum p_1^{\epsilon_1}p_2^{\epsilon_2}\cdots p_n^{\epsilon_n}=\prod_{k=1}^{n}(p_k+1),$$
where the sum ranges over all $n$-tuples $(\epsilon_1, \epsilon_2,\dots, \epsilon_n)$ where $\epsilon_k=0$ or~1 for $1\le k\le n$. Hence
$$\lim_{n\to\infty}\frac{a(s_n)}{\sqrt{s_n}}\ge\lim_{n\to\infty}\prod_{k=1}^n\left(1+\frac1{p_k}\right)=\infty.$$

The average order of~$a(n)$ is interesting when compared to that of the function $\sigma(n)$, which adds {\it all\/} the positive divisors of~$n$,
$$\sigma(n)=\sum_{d\mid n}d.$$
The sequence $\sigma(n)$ appears as \seqnum{A000203} in the {\it OEIS\/}. The average order of~$\sigma(n)$ is $\frac{\pi^2}6n$ (see Hardy and Wright~\cite[\S\ 18.3,\ Theorem\ 324]{HW}), i.e., we have a nonunit multiple of~$n$ ($\frac{\pi^2}6\approx1.645$), as compared to Theorem~\ref{avorder} (merely $\sqrt{n}$ for the average order of $a(n)$).

\section{The Dirichlet series of ${\boldsymbol{a(n)}}$}\label{dirichlet}
An arithmetic function $f(n)$ is said to have a {\it Dirichlet generating series\/}, defined by
$$L(s,f)=\sum_{n=1}^{\infty}\frac{f(n)}{n^s}.$$
Following Riemann, we let~$s$ be a complex variable and write
$$s=\sigma+it,$$
where~$\sigma$ and~$t$ are real; in particular $\sigma=\text{Re} (s)$. Hence $|n^s|=n^{\sigma}$, therefore
$$\sum_{n=1}^{\infty}\left|\frac{a(n)}{n^s}\right|=\sum_{n=1}^{\infty}\frac{a(n)}{n^{\sigma}}.$$
Since $a(n)\ge1$ for all~$n\in\mathbb{N}$, it follows that 
\begin{equation}\label{exampleseries}
\sum_{n=1}^{\infty}\frac{a(n)}{n^{\sigma}}\end{equation}
diverges for all $\sigma\le1$; similarly, as $a(n)\le n$ for all~$n$, it follows that the series~\eqref{exampleseries} converges for all $\sigma>2$ (see Apostol~\cite[Theorem~11.8]{apostol}).
Therefore, there exists $\alpha\in\mathbb{R}$, $1<\alpha\le2$, such that the Dirichlet series $L(s,a)$ converges on the half-plane $\sigma>\alpha$, but does not converge on the half-plane $\sigma<\alpha$. Here, $\alpha$ is called the {\it abscissa of convergence\/} of $L(s,a)$ (see Apostol~\cite[Theorem~11.9]{apostol}). 

Recall that the Dirichlet series $L(s,1)$ is the Riemann zeta function when $\sigma>1$, and that $L(s,1)$ has $\alpha=1$ as its abscissa of convergence. We write $\zeta(s)=L(s,1)$. 

Thus it follows that $L(s,\sqrt{n})=\zeta\left(s-\frac12\right)$, and has as its abscissa of convergence $\alpha=3/2$. Therefore, in light of Theorem~\ref{avorder}, we expect the same abscissa of convergence for~$a(n)$. 

\begin{theorem}\label{abscissa}
The abscissa of convergence for the Dirichet series $L(s,a)$ is given by $\alpha=3/2$.
\end{theorem}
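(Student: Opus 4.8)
The plan is to use the standard fact (Apostol, Theorem~11.8 or the analogous result for partial sums) that the abscissa of convergence of a Dirichlet series with \emph{nonnegative} coefficients is governed by the growth of its partial sums: if $A(x)=\sum_{k\le x}a(k)$ satisfies $A(x)=O(x^{c})$ then $L(s,a)$ converges for $\sigma>c$, and conversely if $L(s,a)$ converges for $\sigma>c$ then $A(x)=O(x^{c})$. Since $a(n)\ge 0$, this equivalence applies directly. Theorem~\ref{avorder} gives $A(x)=\tfrac23 x\sqrt{x}+O(x\ln x)$, so in particular $A(x)=O(x^{3/2})$, which immediately yields $\alpha\le 3/2$.

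For the reverse inequality $\alpha\ge 3/2$, I would argue that $A(x)$ is genuinely of order $x^{3/2}$ and not smaller: from~\eqref{avordereq} we have $A(x)\sim \tfrac23 x^{3/2}$, so $A(x)\ne o(x^{3/2-\epsilon})$ for any $\epsilon>0$; indeed $A(x)/x^{3/2-\epsilon}\to\infty$. If $L(s,a)$ converged at some real $s=\sigma_0<3/2$, then by the partial-summation bound just quoted we would have $A(x)=O(x^{\sigma_0})$, contradicting $A(x)\sim\tfrac23 x^{3/2}$. Hence the series cannot converge for any $\sigma<3/2$, giving $\alpha\ge 3/2$. Combining the two bounds gives $\alpha=3/2$.

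Alternatively, and perhaps more in keeping with the elementary flavor of the paper, I would avoid quoting the partial-sums criterion and instead argue termwise. For the upper bound: $a(n)\le\sqrt{n}\,\tau(n)$ (Walker's observation in the introduction), and since $\tau(n)=O(n^{\delta})$ for every $\delta>0$, the series $\sum a(n)/n^{\sigma}$ is dominated by $\sum \tau(n)/n^{\sigma-1/2}$, which converges whenever $\sigma-1/2>1$, i.e.\ $\sigma>3/2$; thus $\alpha\le 3/2$. For the lower bound: since $a(n)\ge 1$ for all $n$, the terms $a(n)/n^{\sigma}$ do not even tend to $0$ when $\sigma\le 0$, and more usefully one compares with $\sqrt{n}$ on a thin subsequence — e.g.\ for $n$ a perfect square $m^2$ we have $a(m^2)\ge m=\sqrt{n}$ (since $m\mid m^2$ and $m\le\sqrt{m^2}$), so $\sum_{m} a(m^2)/m^{2\sigma}\ge\sum_m m^{1-2\sigma}$, which diverges for $2\sigma-1\le 1$, i.e.\ $\sigma\le 1$; this alone only recovers $\alpha\ge 1$, so to reach $3/2$ one really does need the average-order information rather than a pointwise lower bound.

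Consequently the cleanest route is the first one, and I would present it that way: invoke the nonnegativity of $a$, cite the standard theorem relating the abscissa of convergence to the order of magnitude of the summatory function, and feed in Theorem~\ref{avorder}. The main (and only real) obstacle is making sure the converse direction is cited correctly — namely that convergence of a Dirichlet series with nonnegative terms at $s=\sigma_0$ forces the partial sums to be $O(x^{\sigma_0})$ — since a pointwise lower bound on $a(n)$ is too weak to pin down $\alpha$ beyond $1$; the asymptotic $A(x)\sim\tfrac23 x^{3/2}$ is exactly what closes the gap. Everything else is bookkeeping.
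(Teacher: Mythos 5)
Your proposal is correct, but it takes a genuinely different route from the paper. The paper never passes through the summatory function: it re-runs the lattice-point decomposition into the regions $A$ and $B$ from the proof of Theorem~\ref{avorder} directly on the weighted sums $\sum_{k\le n}a(k)k^{-\sigma}$, obtaining the explicit lower bound $2\log\left[\sqrt{n}\right]-2\zeta(3/2)$ at $\sigma=3/2$ (so divergence holds at the abscissa itself) and the uniform upper bound $\frac1{2-\sigma}\left(\zeta(2(\sigma-1))+1\right)$ for $3/2<\sigma<2$, which together with the previously noted convergence for $\sigma>2$ pins down $\alpha=3/2$. You instead deduce the theorem from Theorem~\ref{avorder} plus the standard dictionary between the growth of $A(x)=\sum_{k\le x}a(k)$ and the abscissa: $A(x)=O(x^{3/2})$ gives convergence for $\sigma>3/2$ by Abel summation, while convergence at a real $\sigma_0<3/2$ would force $A(x)\le x^{\sigma_0}\sum_{k\le x}a(k)k^{-\sigma_0}\le L(\sigma_0,a)\,x^{\sigma_0}$ by nonnegativity, contradicting $A(x)\sim\frac23x^{3/2}$. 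Your route is shorter and reuses Theorem~\ref{avorder} instead of repeating the hyperbola computation; the paper's route stays entirely elementary and self-contained, produces explicit bounds, and additionally settles the behavior at $\sigma=3/2$ itself, which your argument does not (and need not, given how the abscissa is defined).

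One caution: your blanket statement that convergence of $L(s,a)$ for all $\sigma>c$ implies $A(x)=O(x^{c})$ is false in general even for nonnegative coefficients (e.g., $a(n)=\tau(n)$ with $c=1$ has $A(x)\sim x\log x$); only $A(x)=O(x^{c+\epsilon})$ for each $\epsilon>0$ follows. But the step you actually invoke in the contradiction --- convergence \emph{at} a fixed real point $\sigma_0$, with nonnegative terms, bounding $A(x)$ by $L(\sigma_0,a)x^{\sigma_0}$ --- is correct, and convergence at any complex point with real part less than $3/2$ would imply convergence at some real $\sigma_0<3/2$, so your argument goes through as used.
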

\begin{proof}
We need only show that the series~\eqref{exampleseries} diverges at~$\sigma=3/2$, and converges for~$3/2<\sigma<2$ (for, $L(\sigma,a)$ decreases as $\sigma\in\mathbb{R}$ increases). 

First we consider 
\begin{align*}
\sum_{k=1}^n\frac{a(k)}{k^{3/2}}&=\sum_{k=1}^n\frac1{k^{3/2}}\sum_{d\mid k \atop d\le\sqrt{k}}d\\
&=\sum_{k=1}^n\sum_{d\mid k \atop d\le\sqrt{k}}\frac1{(k/d)^{3/2}}\cdot\frac1{d^{1/2}}\\
&=\sum_{(x,y)\in A}\frac1{x^{3/2}}\cdot\frac1{y^{1/2}}+\sum_{(x,y)\in B}\frac1{x^{3/2}}\cdot\frac1{y^{1/2}},\end{align*}
where~$A$ and~$B$ are defined as in the proof of Theorem~\ref{avorder} (see Figure~\ref{hwpf}). 

By elementary calculus,
$$\sum_{y=1}^{x}\frac1{y^{1/2}}\ge\int_1^x\frac{dy}{y^{1/2}}= 2x^{1/2}-2,$$
hence
\begin{gather*}
\sum_{(x,y)\in A}\frac1{x^{3/2}}\cdot\frac1{y^{1/2}}=\sum_{x=1}^{[\sqrt{n}]}\frac1{x^{3/2}}\sum_{y=1}^{x}\frac1{y^{1/2}}
\ge2\sum_{x=1}^{[\sqrt{n}]}\frac1{x}-2\sum_{x=1}^{[\sqrt{n}]}\frac1{x^{3/2}}\\
\ge2\log\left[\sqrt{n}\right]-2\zeta(3/2),\end{gather*}
where we applied $\sum_{x=1}^m1/x\ge\log{m}$ for all $m\in\mathbb{N}$. Clearly,
$$\sum_{(x,y)\in B}\frac1{x^{3/2}}\cdot\frac1{y^{1/2}}\ge0,$$
hence
$$\sum_{k=1}^n\frac{a(k)}{k^{3/2}}=\sum_{(x,y)\in A}\frac1{x^{3/2}}\cdot\frac1{y^{1/2}}+\sum_{(x,y)\in B}\frac1{x^{3/2}}\cdot\frac1{y^{1/2}}\ge2\log\left[\sqrt{n}\right]-2\zeta(3/2),$$
which diverges to infinity as $n\to\infty$; thus the series~\eqref{exampleseries} diverges at~$\sigma=3/2$.

Next we consider $\frac32<\sigma<2$. We remark that for $M\in\mathbb{N}$ we have
\begin{equation}\label{aux01}
\sum_{y=1}^M\frac1{y^{\sigma-1}}\le1+\int_1^M\frac{dy}{y^{\sigma-1}}\le\frac{M^{2-\sigma}}{2-\sigma}.\end{equation}
Here,
\begin{align*}
\sum_{k=1}^n\frac{a(k)}{k^{\sigma}}&=\sum_{k=1}^n\frac1{k^{\sigma}}\sum_{d\mid k\atop d\le\sqrt{k}}d\\
&=\sum_{k=1}^n\sum_{d\mid k\atop d\le\sqrt{k}}\frac1{(k/d)^{\sigma}}\cdot\frac1{d^{\sigma-1}}\\
&=\sum_{(x,y)\in A}\frac1{x^{\sigma}}\cdot\frac1{y^{\sigma-1}}+\sum_{(x,y)\in B}\frac1{x^{\sigma}}\cdot\frac1{y^{\sigma-1}}.\end{align*}
Applying~\eqref{aux01}, we have both
\begin{align*}
\sum_{(x,y)\in A}\frac1{x^{\sigma}}\cdot\frac1{y^{\sigma-1}}&=\sum_{x=1}^{[\sqrt{n}]}\frac1{x^{\sigma}}\sum_{y=1}^x\frac1{y^{\sigma-1}}\\
&\le\sum_{x=1}^{[\sqrt{n}]}\frac1{x^{\sigma}}\cdot\frac{x^{2-\sigma}}{2-\sigma}\\
&=\frac1{2-\sigma}\sum_{x=1}^{[\sqrt{n}]}\frac1{x^{2(\sigma-1)}},\end{align*}
and
\begin{align*}
\sum_{(x,y)\in B}\frac1{x^{\sigma}}\cdot\frac1{y^{\sigma-1}}&=\sum_{x=[\sqrt{n}]+1}^n\frac1{x^{\sigma}}\sum_{y=1}^{[n/x]}\frac1{y^{\sigma-1}}\\
&\le\sum_{x=[\sqrt{n}]+1}^n\frac1{x^{\sigma}}\cdot\frac{[n/x]^{2-\sigma}}{2-\sigma}\\
&\le\frac{n^{2-\sigma}}{2-\sigma}\sum_{x=[\sqrt{n}]+1}^n\frac1{x^2},\end{align*}
hence
\begin{equation}\label{aux02}
\sum_{k=1}^n\frac{a(k)}{k^{\sigma}}\le
\frac1{2-\sigma}\sum_{x=1}^{[\sqrt{n}]}\frac1{x^{2(\sigma-1)}}+\frac{n^{2-\sigma}}{2-\sigma}\sum_{x=[\sqrt{n}]+1}^n\frac1{x^2}.\end{equation}
Clearly
\begin{equation}\label{aux03}
\sum_{x=1}^{[\sqrt{n}]}\frac1{x^{2(\sigma-1)}}\le\zeta(2(\sigma-1)).\end{equation}
We remark that
\begin{equation}\label{aux04}
\sum_{x=[\sqrt{n}]+1}^n\frac1{x^2}\le\frac1{\sqrt{n}}\;,\end{equation}
because
$$
\sum_{x=[\sqrt{n}]+1}^n\frac1{x^2}\le\frac1{([\sqrt{n}]+1)^2}+\int_{[\sqrt{n}]+1}^n\frac{dx}{x^2}\le\frac1{n}+\int_{\sqrt{n}}^n\frac{dx}{x^2}\,=\,\frac1{\sqrt{n}}\;.$$
Thus by~\eqref{aux02}, \eqref{aux03}, and~\eqref{aux04}, we have
\begin{align*}
\sum_{k=1}^n\frac{a(k)}{k^{\sigma}}&\le\frac{\zeta(2(\sigma-1))}{2-\sigma}
+\frac{n^{2-\sigma}}{2-\sigma}\cdot\frac1{\sqrt{n}}\\
&=\frac1{2-\sigma}\left(\zeta(2(\sigma-1))+\frac1{n^{\sigma-\frac32}}\right)\\
&\le\frac1{2-\sigma}\left(\zeta(2(\sigma-1))+1\right)\end{align*}
for all~$n\in\mathbb{N}$. Hence the series~\eqref{exampleseries} converges for all~$\sigma$ such that $\frac32<\sigma<2$.\end{proof}

We may define the arithmetic function $b(n)$ by $b(1)=1$, and, when~$n$ has unique prime factorization $n=p_1^{\beta_1}p_2^{\beta_2}\cdots p_k^{\beta_k}$, 
$$b(n)=a(p_1^{\beta_1})a(p_2^{\beta_2})\cdots a(p_k^{\beta_k}).$$
Thus $b(n)$ is multiplicative, and $b(n)\le a(n)$ for all $n\in\mathbb{N}$ by Lemma~\ref{supermult}. Note, then, that for all $\sigma>3/2$ we have $L(\sigma,b)\le L(\sigma,a)$. Furthermore, as~$b(n)$ is multiplicative, then $L(s,b)$ has an Euler product representation on its half-plane of convergence (see Apostol~\cite[Theorem~11.6]{apostol}), given by
\begin{align*}
L(s,b)&=\prod_{p}\left(1+\frac{b(p)}{p^{s}}+\frac{b(p^2)}{p^{2s}}+\frac{b(p^3)}{p^{3s}}+\cdots\right)
\\
&=\prod_p\left(1+\frac1{p^{s}}+\frac{p+1}{p^{2s}}+\frac{p+1}{p^{3s}}+\frac{p^2+p+1}{p^{4s}}+\frac{p^2+p+1}{p^{5s}}+\cdots\right)\\
&=\prod_p\left(1+\frac1{p^{2s-1}}+\frac1{p^{4s-2}}+\cdots\right)
\left(1+\frac1{p^s}+\frac1{p^{2s}}+\cdots\right)\\
&=\prod_p\left(1-\frac1{p^{2s-1}}\right)^{-1}\left(1-\frac1{p^{s}}\right)^{-1}\\
&=\zeta(2s-1)\zeta(s),\end{align*}
where the products are taken over all the primes~$p$. Note that the second line follows because
$$b(p^n)=a(p^n)=1+p+\cdots+p^{[n/2]}$$
for all primes~$p$ and integers~$n\ge0$. 

On the other hand, as $a(n)\le n$ for all natural numbers~$n$, we have for all $\sigma>2$,
$$L(\sigma,a)\le\sum_{n=1}^{\infty}\frac{n}{n^{\sigma}}=\zeta(\sigma-1).$$
Hence for all $\sigma>2$,
\begin{equation}\label{adir}
\zeta(2\sigma-1)\zeta(\sigma)\le L(\sigma,a)\le\zeta(\sigma-1).\end{equation}
In light of Theorem~\ref{avorder}, this is unsurprising, as, recalling $L(\sigma,\sqrt{n})=\zeta\left(\sigma-\frac12\right)$,
we see that the same bounds as in~\eqref{adir} hold for all~$\sigma>2$:
$$\zeta(2\sigma-1)\zeta(\sigma)\le L(\sigma,\sqrt{n})\le\zeta(\sigma-1).$$
The latter inequality is immediate, while the former follows because
$$\left(1-\frac1{p^{2\sigma-1}}\right)^{-1}\left(1-\frac1{p^{\sigma}}\right)^{-1}\le\left(1-\frac1{p^{\sigma-\frac12}}\right)^{-1}$$
for all primes~$p$ and all $\sigma>2$.

Note that 
$$\zeta(2s-1)=\sum_{n=1}^{\infty}\frac{n}{n^{2s}},$$
hence $\zeta(2s-1)=L(s,f)$, where
$$f(n)=\begin{cases}\sqrt{n},&\text{if $n$ is a square;}\\
0,&\text{otherwise.}\end{cases}$$
As $L(s,b)=\zeta(2s-1)\zeta(s)$, then (see Apostol~\cite[Theorem~11.5]{apostol})
$$b(n)=\sum_{d\mid n}f(d).$$
Thus $b(n)$ adds the square roots of the square divisors of~$n$. For example, $b(72)=1+2+3+6=12$; this compares to $a(72)=1+2+3+4+6+8=24$.

\bigskip
\hrule
\bigskip

\noindent 2010 {\it Mathematics Subject Classification}: 11A25.

\noindent \emph{Keywords:} small divisors, arithmetic function, average order, Dirichlet series.

\noindent (Concerned with sequences \seqnum{A066839} and \seqnum{A000203}.)

\begin{thebibliography}{9}

\bibitem{apostol} T. Apostol, {\it Introduction to Analytic Number Theory\/}, Springer-Verlag, 1976.


\bibitem{HW}
G.~H.~Hardy and E.~M.~Wright, {\it An Introduction to the Theory of Numbers,\/} Oxford at the Clarendon Press, Sixth Edition, 2007.

\bibitem{ian}
D.~E.~Iannucci, ``When the small divisors of a natural number are in arithmetic progression,'' {\it On Line Journal of Combinatorial Number Theory\/}, {\tt \#77}, 2018.


\bibitem{oeis} 
N.~J.~A.~Sloane, The On-Line Encyclopedia of Integer
Sequences.  {\tt http://oeis.org} 

\bibitem{yaccoz}
J.~C.~Yaccoz, An introduction to small divisors problems, in M. Waldschmidt, P. Moussa, J. Luck, C. Itzykson, eds., {\it From Number Theory to Physics\/}, Springer, 1992, pp. 659--679.


\end{thebibliography}
\end{document}